\newcolumntype{C}[1]{>{\centering\let\newline\\\arraybackslash\hspace{0pt}}m{#1}}
\newcommand{\cmark}{\ding{51}}%
\newcommand{\xmark}{\ding{55}}%
\newtheorem{theorem}{Theorem}
\newtheorem{lemma}{Lemma}
\newtheorem{proposition}{Proposition}
\newtheorem*{theorem*}{Theorem}
\theoremstyle{definition}
\newtheorem{example}{Example}
\newtheorem*{corollary*}{Corollary}
\newtheorem*{remark*}{Remark}
\tikzset{dots/.append style={ultra thick, fill=none}}
\begin{document}
	
	\title[Dichotomy property for maximal operators in non-doubling setting]{Dichotomy property for maximal operators in non-doubling setting}
	
	\author{Dariusz Kosz}
	\address{ 
		\newline Faculty of Pure and Applied Mathematics
		\newline Wroc\l{}aw University of Science and Technology 
		\newline Wyb. Wyspia\'nskiego 27 
		\newline 50-370 Wroc\l{}aw, Poland
		\newline \textit{Dariusz.Kosz@pwr.edu.pl}	
	}

	\begin{abstract} We investigate a dichotomy property for Hardy--Littlewood maximal operators, non-centered $M$ and centered $M^c$, that was noticed by Bennett, DeVore and Sharpley. 
	We illustrate the full spectrum of possible cases related to the occurrence or not of this property for $M$ and $M^c$ in the context of non-doubling metric measure spaces $(X, \rho, \mu)$. In addition, if $X = \mathbb{R}^d$, $d \geq 1$, and $\rho$ is the metric induced by an arbitrary norm on $\mathbb{R}^d$, then we give the exact characterization (in terms of $\mu$) of situations in which $M^c$ possesses the dichotomy property provided that $\mu$ satisfies some very mild assumptions. 
		
	\medskip	
	\noindent \textbf{2010 Mathematics Subject Classification.} Primary 42B25, 51F99
	
	\medskip
	\noindent \textbf{Key words:} maximal operator, dichotomy property, metric measure space, non-doubling measure.  	
	\end{abstract}
	
	\thanks{
	%	$\\ \noindent \textit{2010 Mathematics Subject Classification.}$ Primary 42B35, 51F99.
	%	$\\ \noindent \textit{Key words:}$ maximal operator, dichotomy property, metric measure space, non-doubling measure.
	The author is supported by the National Science Centre of Poland, project no. 2016/21/N/ST1/01496.	
	} 
	
	\maketitle
	\section{Introduction}
	
	A dichotomy for the Hardy--Littlewood maximal operators was noticed for the first time by Bennett, DeVore and Sharpley in the context of the space of functions of bounded mean oscillation. In \cite{BDVS} the authors discovered the principle that for any function $f \in BMO(\mathbb{R}^d)$, $d \geq 1$, its maximal function $Mf$ either is finite almost everywhere or equals $+\infty$ on the whole $\mathbb{R}^d$. Later on, however, it turned out that this property is not directly related to the $BMO$ concept. Fiorenza and Krbec \cite{FK} proved that for any $f \in L^1_{loc}(\mathbb{R}^d)$ the following holds: if $Mf(x_0) < \infty$ for some $x_0 \in \mathbb{R}^n$, then $Mf(x)$ is finite almost everywhere. In turn, in \cite{AK} Aalto and Kinnunen have shown in a very elegant way that this implication remains true if one replaces the Euclidean space by any metric measure space with a doubling measure. Finally, some negative results in similar contexts also appeared in the literature. For example, in \cite{LSW} C.-C. Lin, Stempak and Y.-S. Wang observed that such a principle does not take place for local maximal operators. 
	
	The aim of this article is to shed more light on the above-mentioned issue by examining the occurrence of the dichotomy property for the two most common maximal operators of Hardy--Littlewood type, non-centered $M$ and centered $M^c$, associated with metric measure spaces for which the doubling condition fails to hold.
	
	By a metric measure space $\mathbb{X}$ we mean a triple $(X, \rho, \mu)$, where $X$ is a set, $\rho$ is a metric on $X$ and $\mu$ is a non-negative Borel measure. Throughout the paper we will additionally assume (without any further mention) that $\mu$ is such that $0 < \mu(B) < \infty$ holds for each open ball $B$ determined by $\rho$.
	
	In this context we introduce the $\textit{Hardy--Littlewood}$ $\textit{maximal operators}$, non-centered $M$ and centered $M^c$, by
	\begin{displaymath}
		Mf(x) = \sup_{B \ni x} \frac{1}{\mu(B)} \int_B |f| \, d \mu , \qquad x \in X,
	\end{displaymath}
	and
	\begin{displaymath}
	M^cf(x) = \sup_{r > 0} \frac{1}{\mu(B_r(x))} \int_{B_r(x)} |f| \, d\mu, \qquad x \in X,
	\end{displaymath}
	respectively. Here by $B$ we mean any open ball in $(X,\rho)$, while $B_r(x)$ stands for the open ball centered at $x \in X$ with radius $r>0$. We also require the function $f$ used above to belong to the space $L^1_{loc}(\mu)$ which means that $\int_B |f| \, d \mu < \infty$ for any ball $B \subset X$.
	
	We say that $M$ possesses the $\textit{dichotomy property}$ if for any $f \in L^1_{loc}(\mu)$ exactly one of the following cases holds: either $\mu(E_\infty(f)) = 0$ or $E_\infty(f) = X$, where $E_\infty(f) = \{x \in X \colon Mf(x) = \infty\}$. Similarly, $M^c$ possesses the dichotomy property if for any $f \in L^1_{loc}(\mu)$ we have either $\mu(E^c_\infty(f)) = 0$ or $E^c_\infty(f) = X$, where $E^c_\infty(f) = \{x \in X \colon M^cf(x) = \infty\}$. Notice that, equivalently, the dichotomy property can be formulated in the following way: if $Mf(x_0) < \infty$ (respectively, $M^cf(x_0) < \infty$) for some $f \in L^1_{loc}(\mu)$ and $x_0 \in X$, then $Mf$ (respectively, $M^cf$) is finite $\mu$-almost everywhere. 
	
	Observe that for any $f \in L^1_{loc}(\mu)$ we have $E^c_\infty(f) \subset E_\infty(f)$. Moreover, if the space is doubling (which means that $\mu(B_{2r}(x)) \lesssim \mu(B_r(x))$ holds uniformly in $x \in X$ and $r > 0$), then $E^c_\infty(f) = E_\infty(f)$. Nevertheless, at first glance, there is no clear reason why the two properties mentioned in the previous paragraph would be somehow interdependent in general, since $Mf$ and $M^cf$ may be incomparable if $(X, \rho, \mu)$ is not doubling. In other words, we have no obvious indications at this point that the existence or absence of the dichotomy property for one operator implies its existence or absence for another one. Therefore, natural problems arise: ``can each of the four possibilities actually take place for some metric measure space?" and ``can we additionally demand that this space be non-doubling?". Thus, one of the two major results in this article is to prove the following theorem that gives affirmative answers to these two questions.
	
	\begin{theorem}
		For each of the four possibilities regarding whether $M$ and $M^c$ possess the dichotomy property or not, there exists a non-doubling metric measure space for which the associated maximal operators behave just the way we demand.
	\end{theorem}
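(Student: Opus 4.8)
The plan is to exhibit, separately for each of the four combinations, a concrete non-doubling space $\mathbb{X}=(X,\rho,\mu)$ on which the pair $(M,M^c)$ realizes exactly the prescribed behavior. Since the dichotomy property is an assertion quantified over all $f\in L^1_{loc}(\mu)$, the two directions are of very different character: to show that one of the operators \emph{lacks} the dichotomy it suffices to produce a single witness $f$ together with a point $x_0$ at which the relevant maximal function is finite while the corresponding set $E_\infty(f)$ (respectively $E^c_\infty(f)$) has positive measure but is not all of $X$; to show that an operator \emph{possesses} the dichotomy one must establish the finiteness-propagation implication uniformly in $f$. I would therefore design each space so that its positive assertions reduce to a short, space-specific comparison lemma, while its negative assertions are certified by an explicit atomic function.

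For the constructions themselves I would work with line-like or purely atomic spaces: a countable set of points $\{p_n\}$ placed along a half-line with prescribed gaps $d_n=\rho(p_n,p_{n+1})$ and carrying atoms of mass $m_n=\mu(\{p_n\})$ (possibly together with a continuous background component). The whole analysis then rests on one elementary observation: a centered ball $B_r(p_k)$ absorbs its neighbours one at a time as $r$ grows, so both maximal functions of an atomic $f=\sum_n c_n\mathbf 1_{\{p_n\}}$ are governed by the mass-weighted partial averages of the coefficients $c_n$, whereas the non-centered operator additionally lets one position the evaluation point near the \emph{edge} of a ball and thereby capture a heavy neighbour while keeping the total mass small. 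By tuning the growth of $m_n$ against $d_n$ I can make these two effects agree or disagree at will, which is precisely what is needed to decouple the behavior of $M$ from that of $M^c$. Non-doubling, required in every case, will be read off directly from the chosen weights by testing the inequality $\mu(B_{2r})\lesssim\mu(B_r)$ along a suitable sequence of balls.

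The two combinations in which $M$ and $M^c$ agree are the more transparent ones: for ``both possess'' I would start from a space close to the Euclidean line, perturbed just enough to break doubling while leaving the averaging geometry tame, so that an Aalto--Kinnunen-type chaining still goes through; for ``neither possesses'' a single weighted atomic line, of the kind already known to defeat the dichotomy for local operators, can be arranged to break it simultaneously for $M$ and $M^c$ with one common witness function.

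The genuinely delicate case, and the one I expect to be the main obstacle, is to force the two operators apart, where the inclusion $E^c_\infty(f)\subset E_\infty(f)$ is the crux. When $M$ fails but $M^c$ succeeds one must build a function and a positive-measure set on which \emph{non-centered} averages blow up while \emph{every} centered ball keeps its average finite: since $M\ge M^c$ forces $M^c f(x_0)<\infty$ at the reference point, the dichotomy for $M^c$ then pins $E^c_\infty(f)$ to a null set, so the construction must manufacture a positive-measure gap between $E^c_\infty(f)$ and $E_\infty(f)$, exactly the configuration in which points see a heavy atom only through off-center balls. Engineering this gap, and at the same time proving the matching positive dichotomy via the comparison lemma for that same space, is where the balancing of $m_n$ against $d_n$ has to be carried out most carefully; the mirror case, in which $M^c$ fails while $M$ succeeds (so that the dichotomy for $M$ forces $E_\infty(f)=X$ for the witness), I would treat by the dual tuning of the same parameters.
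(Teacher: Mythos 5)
Your proposal correctly identifies the logical skeleton (a single witness function suffices for failure; possession requires an argument uniform in $f$), and your two reductions via $M^c\le M$ — that a witness breaking $M$ must have $\mu(E^c_\infty(f))=0$ when $M^c$ has the dichotomy, and that a witness breaking $M^c$ must have $E_\infty(f)=X$ when $M$ has it — are exactly right. But the proposal stops at the level of a plan: no space is actually constructed, no witness is exhibited, and neither of the two kinds of statements is proved. Since the theorem is an existence statement whose entire content is the four verified examples, what you have written defers precisely the parts that constitute the proof. Concretely, two ingredients are missing. First, you have no verifiable criterion for the \emph{positive} half, i.e.\ for proving that an operator \emph{does} possess the dichotomy on a non-doubling space. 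The paper supplies two such tools: Proposition 1 (on $(\mathbb{R},d_e,\mu)$ the operator $M$ has the dichotomy for \emph{every} Borel $\mu$, proved via a Lebesgue-point set $L_f$ together with the fact that a union of overlapping intervals is again a ball) and Proposition 2 (if $\limsup_{r\to\infty}\mu(B_{r+1}(y_0))/\mu(B_r(y_0))<\infty$, then $M^c$ has the dichotomy). Your substitute — ``an Aalto--Kinnunen-type chaining still goes through'' on a space ``perturbed just enough to break doubling'' — cannot be taken on faith, because the Aalto--Kinnunen argument uses the doubling condition essentially; some replacement condition of the type \eqref{C} must be isolated and proved.

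Second, and more seriously, your framework of atomic spaces along a half-line is poorly matched to the case you yourself single out as the crux ($M$ fails, $M^c$ holds). The ``elementary observation'' you build on — that non-centered balls can be positioned with the evaluation point near the edge so as to capture a heavy neighbour cheaply — only produces points where $Mf=\infty$; that is the easy half. The crux is the opposite requirement: to force $Mf(x_0)<\infty$ one must ensure that \emph{every} ball containing $x_0$ which captures the distant mass is heavy, i.e.\ one needs a barrier that scales with the size of the ball. The paper manufactures this with a genuinely two-dimensional fact: any $d_\infty$-ball containing both $(-1,0)$ and $(K,0)$ has side length at least $K+1$ and therefore must contain a column point $(0,\pm\lfloor K/2\rfloor)$ of mass $4^{\lfloor K/2\rfloor}$ — the vertical extent of a ball is forced by its horizontal extent. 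On a line no such mechanism exists: the barrier separating $x_0$ from the far mass is a single point of fixed measure, and indeed Proposition 1 shows that on all of $\mathbb{R}$ the failure of the dichotomy for $M$ is outright impossible, which is exactly why the paper moves to $\mathbb{Z}^2$ for Examples 3 and 4. On a proper atomic subset of a line one could try to exploit the forced symmetric overshoot of balls centered at atoms (a ball centered at $c$ that reaches $x_0$ must also reach the reflected region on the other side of $c$), but this is a different and rather delicate idea that appears nowhere in your outline, and it must be implemented while simultaneously keeping the ball-growth ratio bounded so that the Proposition-2-type criterion still yields the dichotomy for $M^c$ on the very same space. Asserting that the masses $m_n$ and gaps $d_n$ ``can be made to agree or disagree at will'' is a restatement of the theorem, not an argument; until those constructions and verifications are carried out, the proof has a genuine gap.
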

	
	\begin{proof}
		Examples 1, 2, 3 and 4 in Sections 2 and 3 together constitute the proof of this theorem, illustrating all the desired situations.
	\end{proof}
	
It is worth noting at this point that, in addition to indicating appropriate examples, our goal is also to ensure that they are constructed as simply as possible. Thus, in all examples presented later on $X$ is either $\mathbb{R}^d$ or $\mathbb{Z}^d$, $d \geq 1$, while $\rho$ is the standard Euclidean metric $d_e$ or the supremum metric $d_\infty$. Finally, in the discrete setting $\mu$ is defined by letting the value $\mu(\{x\}) > 0$ to each point $x \in X$, while in the continuous situation $\mu$ is determined by a suitable strictly positive weight $w$.

For the convenience of the reader, the results obtained in Examples 1, 2, 3 and 4 have been summarized in Table 1 below.

\begin {table}[H]
\vspace*{0.3cm}
\caption {Occurrence of the dichotomy property (DP) for $M$ and $M^c$ associated with spaces described in Examples 1, 2, 3 and 4.} \label{T}

\begin{center}
	\begin{tabular}{ | c | C{0.7cm} | C{0.7cm} | c | c | c |}
		
		\hline
		 & $X$ & $\rho$ & $\mu$ & DP for $M$ & DP for $M^c$ \\ \hline
		
		Ex. $1$ & $\mathbb{R}$ & $d_e$ & $e^{x^2} dx$ & \cmark &  \xmark \\ \hline
		
		Ex. $2$ & $\mathbb{R}$ & $d_e$ & $e^{-x^2} dx$ & \cmark & \cmark \\ \hline
		
		Ex. $3$ & $\mathbb{Z}^2$ & $d_\infty$ & 
		
		$\mu(n,m) = \left\{ \begin{array}{rl}
		4^{|m|} & \textrm{if } n = 0,  \\
		1 & \textrm{otherwise. }  \end{array} \right.$ & 
		
		\xmark &  \cmark \\ \hline
		
		Ex. $4$ & $\mathbb{Z}^2$ & $d_\infty$ & 
		
		$\mu(n,m) = \left\{ \begin{array}{rl}
		4^{|m|} & \textrm{if } n = 0,  \\
		2^{n^2} & \textrm{if } n < 0 \textrm{ and } m = 0,  \\
		1 & \textrm{otherwise. }  \end{array} \right. $ & 
		
		\xmark & \xmark \\ \hline
	\end{tabular}
\end{center}
\end{table}

One more comment is in order here. While the doubling condition for measures is often assumed in the literature to provide that most of the classical theory works, some statements can be verified under the less strict condition that the space is geometrically doubling or satisfies both geometric doubling and upper doubling properties (see \cite{H} for the details). In our case, although the metric measure spaces appearing in Table 1 are non-doubling, the corresponding metric spaces are geometrically doubling. This means that the general result for the class of doubling spaces, concerning the existence of the dichotomy property for maximal operators, cannot be repeated in the context of geometrically doubling spaces. Finally, Example 5 in Section 4 illustrates the situation where the space is geometrically doubling and upper doubling at the same time, while the associated operator $M$ does not possess the dichotomy property.   
	 
	\section{Real line case}
In this section we study the dichotomy property for the Hardy--Littlewood maximal operators $M$ and $M^c$ associated with the space $(\mathbb{R}, d_e, \mu)$, where $\mu$ is arbitrary. Let us note here that we consider one-dimensional spaces separately, since they have some specific properties, mainly due to their linear order (for example, in this case $M$ always satisfies the weak type $(1,1)$ inequality with constant $2$). Our first task is to prove the following. 

\begin{proposition}
	Consider the space $(\mathbb{R}, d_e, \mu)$, where $\mu$ is an arbitrary Borel measure. Then $M$ possesses the dichotomy property.
\end{proposition}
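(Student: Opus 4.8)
The plan is to verify the equivalent formulation stated above: assuming $Mf(x_0) < \infty$ for some $x_0 \in \mathbb{R}$, I will show that $Mf$ is finite $\mu$-almost everywhere; combined with the trivial alternative that $Mf \equiv \infty$ (so $E_\infty(f) = \mathbb{R}$) when no such $x_0$ exists, this yields the dichotomy. Since $Mf = M|f|$, I may assume $f \geq 0$. The two tools I intend to use are the weak type $(1,1)$ inequality for $M$ with constant $2$, valid on $(\mathbb{R}, d_e, \mu)$ for an arbitrary $\mu$ (as noted above), and the linear order of $\mathbb{R}$, which lets me enlarge any interval to one containing $x_0$ in a controlled way.

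Fix $R > 0$, set $K = (x_0 - R, x_0 + R)$ and $\widetilde{K} = (x_0 - 2R, x_0 + 2R)$, and split $f = g_1 + g_2$ with $g_1 = f \chi_{\widetilde{K}}$ and $g_2 = f \chi_{\widetilde{K}^c}$, so that $Mf \leq Mg_1 + Mg_2$. The local part is routine: since $\widetilde{K} \ni x_0$, the bound at $x_0$ gives $\int g_1 \, d\mu = \int_{\widetilde{K}} f \, d\mu \leq Mf(x_0)\, \mu(\widetilde{K}) < \infty$, so $g_1 \in L^1(\mu)$, and the weak type $(1,1)$ inequality forces $\mu(\{Mg_1 = \infty\}) = 0$. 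It therefore remains to control the tail part $Mg_2$ on $K$.

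The heart of the matter --- and the step I expect to be the main obstacle --- is to show that $Mg_2$ is bounded by a constant (depending on $x_0$, $R$, $\mu$ and $Mf(x_0)$, but not on $x$) uniformly for $x \in K$. Let $x \in K$, say $x \geq x_0$, and let $J = (a,b)$ be any interval containing $x$ with $\int_J g_2 \neq 0$. If $x_0 \in J$ then $\frac{1}{\mu(J)} \int_J g_2 \leq \frac{1}{\mu(J)}\int_J f \leq Mf(x_0)$ directly. Otherwise $a \geq x_0$, and since $J$ meets $\widetilde{K}^c$ while $x < x_0 + R$, necessarily $b > x_0 + 2R$; thus $J \supset (x_0 + R, x_0 + 2R)$, giving the uniform lower bound $\mu(J) \geq \mu((x_0 + R, x_0 + 2R)) =: D > 0$. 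For the numerator I enlarge $J$ to $(x_0 - \varepsilon, b) \ni x_0$ and use $Mf(x_0) < \infty$; letting $\varepsilon \downarrow 0$ and invoking continuity of $\mu$ from above yields $\int_J g_2 \leq \int_J f \leq Mf(x_0)\,\mu([x_0, b)) = Mf(x_0)\big(\mu([x_0,a]) + \mu(J)\big)$. Since $a < x_0 + R$, the excess satisfies $\mu([x_0,a]) \leq \mu([x_0, x_0 + R)) =: A < \infty$, and hence the average over $J$ is at most $Mf(x_0)\,(1 + A/D)$. The case $x \leq x_0$ is symmetric, using the left buffer interval $(x_0 - 2R, x_0 - R)$. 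This is precisely where the one-dimensional order is indispensable: it guarantees that an interval reaching the tail must traverse a buffer of fixed positive measure, compensating for the non-doubling behaviour.

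Putting the pieces together, for a.e. $x \in K$ we get $Mf(x) \leq Mg_1(x) + Mg_2(x) < \infty$, so $\mu(E_\infty(f) \cap K) = 0$. Finally, exhausting $\mathbb{R}$ by $K = (x_0 - n, x_0 + n)$ as $n \to \infty$ gives $\mu(E_\infty(f)) = 0$, completing the dichotomy.
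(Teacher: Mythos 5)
Your proof is correct, and it takes a genuinely different route from the paper's. The paper argues in the contrapositive form: it first proves a differentiation lemma (Lemma 1), stating that $\mu$-almost every $x$ satisfies $\lim_{r \to 0} \sup_{B \ni x, \, r(B) = r} \frac{1}{\mu(B)} \int_B |f(y) - f(x)| \, d\mu(y) = 0$, whose proof needs the weak type $(1,1)$ bound \emph{and} the density of continuous functions in $L^1(\mu)$; then, assuming $\mu(E_\infty(f)) > 0$, it picks such a differentiation point $x$ inside $E_\infty(f)$, uses that property to get a lower bound $\mu(B_n) \geq \delta > 0$ for the intervals witnessing $Mf(x) = \infty$, and enlarges each $B_n$ to the interval $B_n \cup (x, x'+1)$ to conclude $Mf(x') = \infty$ for every $x'$, i.e. $E_\infty(f) = \mathbb{R}$. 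You instead prove the direct implication via the splitting $f = g_1 + g_2$: the local part is handled $\mu$-a.e. by the weak type $(1,1)$ inequality alone, and the tail part by the buffer argument, which even gives the quantitative uniform bound $Mg_2 \leq Mf(x_0)(1 + A/D)$ on $K$. Both proofs use the weak type $(1,1)$ inequality and exploit the linear order of $\mathbb{R}$ at the decisive step --- the paper to extend intervals outward from a singular point, you to force any interval reaching the support of $g_2$ to contain a buffer of fixed positive measure --- but yours is more self-contained, avoiding the Lebesgue-point machinery and the approximation by continuous functions altogether. What the paper's approach buys in return is reusability: the same differentiation-lemma skeleton is recycled in Propositions 2 and 3 for the centered operator in higher dimensions, where a buffer argument has no counterpart (in $\mathbb{R}^2$ a ball joining a point near the origin to a distant mass need not contain any fixed set of positive measure --- precisely the mechanism that makes the dichotomy for $M$ fail in Examples 3 and 4).
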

\noindent The proof of Proposition 1 is preceded by some additional considerations.  

Let $r(B)$ be the radius of a given ball $B$. For $f \in L^1_{loc}(\mu)$ we denote
\begin{displaymath}
L_f = L_f(\mu) = \Big\{ x \in \mathbb{R} \colon \lim_{r \rightarrow 0} \sup_{B \ni x \colon r(B)=r} \frac{1}{\mu(B)} \int_{B} | f(y) - f(x) | \, d\mu(y) = 0 \Big\}, 
\end{displaymath}
and
\begin{displaymath}
L^c_f = L^c_f(\mu) = \Big\{ x \in \mathbb{R} \colon \lim_{r \rightarrow 0} \frac{1}{\mu(B_{r}(x))} \int_{B_{r}(x)} | f(y) - f(x) | \, d\mu(y) = 0 \Big\}. 
\end{displaymath}
Notice that there is a small nuisance here, because $f$ is actually an equivalence class of functions, while $L_f $ and $L^c_f$ clearly depend on the choice of its representative. Nevertheless, for any two representatives $f_1$ and $f_2$ of a fixed equivalence class we have $\mu(L_{f_1} \triangle L_{f_2})=0$ and $\mu(L^c_{f_1} \triangle L^c_{f_2})=0$ (where $\triangle$ denotes the symmetric difference of two sets) and this circumstance is sufficient for our purposes. 

The conclusion of the following lemma is a simple modification of the well known fact about the set of Lebesgue points of a given function. Although the proof is rather standard, we present it for completeness (cf. \cite[Theorem 3.20]{Fo}).  
\begin{lemma}
Consider the space $(\mathbb{R}, d_e, \mu)$ and let	$f \in L^1_{loc}(\mu)$. Then $\mu(\mathbb{R} \setminus L_f) = 0$. 
\end{lemma}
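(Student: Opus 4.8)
The plan is to follow the classical three-step scheme behind the Lebesgue differentiation theorem (cf. \cite[Theorem 3.20]{Fo}), adapted to the non-centered averages, with the weak type $(1,1)$ inequality for $M$ on $(\mathbb{R}, d_e, \mu)$ with constant $2$ (noted above) serving as the essential input. For $x \in \mathbb{R}$ I would write
\[
\Omega f(x) := \limsup_{r \to 0}\ \sup_{B \ni x,\, r(B)=r} \frac{1}{\mu(B)}\int_B |f(y)-f(x)|\, d\mu(y),
\]
so that, since the inner quantity is nonnegative, $x \in L_f$ if and only if $\Omega f(x) = 0$. It is therefore enough to prove that $\mu(\{x : \Omega f(x) > \varepsilon\}) = 0$ for each $\varepsilon > 0$ and then to take the union over $\varepsilon = 1/n$. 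Both $\Omega f$ and the conclusion are local: if $x \in (-N,N)$ and $r < 1/2$, then any ball $B \ni x$ with $r(B) = r$ has diameter $2r$ and hence lies in $(x - 2r, x + 2r) \subset (-N-1, N+1)$, so $\Omega f(x) = \Omega(f\mathbf{1}_{(-N-1,N+1)})(x)$ for all such $x$. Since $\mathbb{R} = \bigcup_N (-N,N)$ and $f\mathbf{1}_{(-N-1,N+1)} \in L^1(\mu)$, this reduces matters to a function in $L^1(\mu)$ with bounded support.

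Fixing such an $f$ and a $\delta > 0$, I would use that continuous functions with compact support are dense in $L^1(\mu)$ (recall $\mu$ is finite on balls, hence Radon on $\mathbb{R}$) to choose a continuous $g$ with $\|f - g\|_{L^1(\mu)} < \delta$, and set $h = f - g$. As $\Omega$ is subadditive, $\Omega f \le \Omega g + \Omega h$. For the continuous part I would again exploit the diameter bound: every $y$ in a ball $B \ni x$ with $r(B) = r$ satisfies $|y - x| < 2r$, so the average of $|g(y) - g(x)|$ over $B$ does not exceed $\sup_{|y-x| < 2r} |g(y) - g(x)|$, which tends to $0$ as $r \to 0$ by continuity of $g$ at $x$; hence $\Omega g \equiv 0$. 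For the remainder, estimating $|h(y) - h(x)| \le |h(y)| + |h(x)|$ and taking the supremum over $B$ yields $\Omega h(x) \le Mh(x) + |h(x)|$.

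Combining the two bounds gives $\{\Omega f > \varepsilon\} \subset \{Mh > \varepsilon/2\} \cup \{|h| > \varepsilon/2\}$. The weak type $(1,1)$ inequality then yields $\mu(\{Mh > \varepsilon/2\}) \le (4/\varepsilon)\|h\|_{L^1(\mu)} \le 4\delta/\varepsilon$, while Chebyshev's inequality gives $\mu(\{|h| > \varepsilon/2\}) \le 2\delta/\varepsilon$. Hence $\mu(\{\Omega f > \varepsilon\}) \le 6\delta/\varepsilon$, and letting $\delta \to 0$ forces $\mu(\{\Omega f > \varepsilon\}) = 0$, which is what is needed.

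I do not expect a genuine obstacle: once the weak type $(1,1)$ bound with constant $2$ is granted, the argument is routine. The only point requiring a little care — and the one place where the \emph{non-centered} nature of $L_f$ actually enters — is the treatment of the continuous part, where one must observe that a ball of radius $r$ merely \emph{containing} $x$ still confines all its points to within distance $2r$ of $x$, so that continuity at the single point $x$ already forces $\Omega g(x) = 0$.
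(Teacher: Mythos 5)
Your proof is correct and follows essentially the same route as the paper's: localize to $(-N-1,N+1)$, approximate by a continuous function, and control the error term via the weak type $(1,1)$ bound for $M$ together with Chebyshev's inequality. The only difference is bookkeeping — you work with the superlevel sets of the oscillation functional $\Omega f$ and let $\delta \to 0$, whereas the paper fixes the accuracy $1/(9N^2)$ and exhibits an explicit exceptional set $A_N = E_N^1 \cup E_N^2$ of measure at most $1/N$.
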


\begin{proof}
	For a function $g \in L^1_{loc}(\mu)$ let us introduce the sets $L_{g,N}$, $N \in \mathbb{N}$, defined by
	\begin{displaymath}
	L_{g,N} = \Big\{ x \in \mathbb{R} \colon \limsup_{r \rightarrow 0} \sup_{B \ni x \colon r(B)=r} \frac{1}{\mu(B)} \int_{B} | g(y) - g(x) | \, d\mu(y) \leq \frac{1}{N} \Big\}.
	\end{displaymath}
	Note that $L_f = \bigcap_{N = 1}^{\infty} L_{f,N}$. Therefore, it suffices to prove that for each $N \in \mathbb{N}$ there exists a Borel set $A_N$ such that $(-N, N) \setminus L_{f,N} \subset A_N$ and $\mu(A_N) \leq 1/N$. 
	
	Fix $N$ and consider $f_N = f \cdot \chi_{(-N-1, N+1)}$. Thus $f_N \in L^1(\mu)$ and $L_{f_N,N}$ coincides with $L_{f,N}$ on $(-N, N)$. We take a continuous function $g_N$ satisfying $\|f_N - g_N \|_{L^1(\mu)} \leq 1 / (9N^2)$ (notice that continuous functions are dense in $L^1(\mu)$ by \cite[Proposition 7.9]{Fo}) and define two auxiliary sets
	\begin{displaymath}
	E_N^1 = \{x \in \mathbb{R} \colon |(f_N - g_N)(x)| > \frac{1}{3N} \}, \qquad E_N^2 = \{x \in \mathbb{R} \colon M(f_N - g_N)(x) > \frac{1}{3N} \}.
	\end{displaymath} 
	Observe that $\mu(E^1_N) \leq 1 / (3N)$ and $\mu(E^2_N) \leq 2 / (3N)$. Now we fix $x_0 \in (-N, N) \setminus (E_N^1 \cup E_N^2)$ and take $0 < \epsilon <1$ such that $|g_N(y) - g_N(x_0)| \leq 1 / (3N)$ for $|y - x_0| < \epsilon$. If $B$ contains $x_0$ and satisfies $r(B)< \epsilon/2$, then by using the estimate
	\begin{displaymath}
	| f(y) - f(x_0) | \leq | f_N(y) - g_N(y) | + |g_N(y) - g_N(x_0)| + |(g_N(x_0) - f_N(x_0)|,
	\end{displaymath}
	which is valid for all $y \in B$, we obtain 
	\begin{displaymath}
	\frac{1}{\mu(B)} \int_{B} | f(y) - f(x_0) | \, d\mu(y)
	\leq M(f_N - g_N)(x_0) + \frac{1}{3N} + |f_N(x_0) - g_N(x_0)| \leq \frac{1}{N},
	\end{displaymath}
	and therefore $A_N = E_N^1 \cup E_N^2$ satisfies the desired conditions. 
\end{proof}

\begin{remark*}
	Of course, the definitions of $L_f$ and $L_f^c$ can also be adapted to the situation of an arbitrary metric measure space $(X, \rho, \mu)$. In this case we have $\mu(X \setminus L_f) = 0$ (respectively, $\mu(X \setminus L_f^c) = 0$) for a given function $f \in L^1_{loc}(\mu)$ if only the associated maximal operator $M$ (respectively, $M^c$) is of weak type $(1,1)$ and continuous functions are dense in $L^1(\mu)$. This is the case, for example, when dealing with $L_f^c$ and the space $(\mathbb{R}^d, \rho, \mu)$, $d \geq 1$, where $\rho$ is the metric induced by a fixed norm (in particular, $\rho = d_e$ and $\rho = d_\infty$ are included) and $\mu$ is arbitrary. We explain some details more precisely in Section 4.
\end{remark*}

Now we are ready to prove Proposition 1.

\begin{proof}
	Assume that $\mu(E_\infty(f)) > 0$. Then we can take $x \in L_f$ such that $Mf(x) = \infty$. There exist balls $B_n$, $n \in \mathbb{N}$, containing $x$ and satisfying
	\begin{displaymath}
	\frac{1}{\mu(B_n)} \int_{B_n} |f(y)| \, d\mu(y) > n.
	\end{displaymath}
	Fix $\epsilon > 0$ such that
	\begin{displaymath}
	\frac{1}{\mu(B)} \int_{B} | f(y) - f(x) | \, d\mu(y) < 1,
	\end{displaymath}
	if $r(B) \leq \epsilon$ and denote $\delta = \min\{ \mu((x-\epsilon/2, x]), \mu([x, x + \epsilon/2))\}$. We obtain that $B_n \subsetneq (x-\epsilon/2, x + \epsilon/2)$ if $n \geq |f(x)| + 1$ and, as a result, $\mu(B_n) \geq \delta$ for that $n$.
	
	Now let us fix an arbitrary point $x' > x$ (the case $x < x'$ can be considered analogously). We denote $\gamma = \mu((x, x'+1)) < \infty$ and $B_n' = B_n \cup (x, x'+1)$, $n \in \mathbb{N}$. Observe that if $n \geq |f(x)| + 1$, then the set $B_n'$ forms a ball containing $x'$ and therefore
	\begin{displaymath}
	Mf(x') \geq \frac{1}{\mu(B_n')} \int_{B_n'} |f(y)| \, d\mu(y) \geq \frac{\mu(B_n)}{\mu(B_n')} \, \frac{1}{\mu(B_n)} \int_{B_n} |f(y)| \, d\mu(y) \geq \frac{\delta n}{\delta + \gamma}.
	\end{displaymath}
	This, in turn, implies $Mf(x') = \infty$, since $n$ can be arbitrarily large. 
\end{proof} 
	
	At the end of this section we show an example of a space $(\mathbb{R}, d_e, w(x) dx)$, where $w$ is a suitable weight (and $w(x) dx$ is non-doubling), for which the centered Hardy--Littlewood maximal operator does not possess the dichotomy property.
	
	\begin{example}
		Consider the space $(\mathbb{R}, d_e, \mu)$ with $d \mu = e^{x^2} dx$. Then $M$ possesses the dichotomy property, while $M^c$ does not.   
	\end{example}
	
	Indeed, it suffices to prove only the second part, since $M$ possesses the dichotomy property by Proposition 1. Consider $f(x) = x \cdot \chi_{(0, \infty)} (x)$. We shall show that $M^c(f) = \infty$ if and only if $x \geq 0$. 
	
	For $x \in \mathbb{R}$ and $r>0$ let us introduce the quantity
	\begin{displaymath}
	A_rf(x) = \frac{1}{\mu(B_r(x))} \int_{B_r(x)} |f(y)| \, e^{y^2} \, dy.
	\end{displaymath}
	At first, observe that $\lim_{r \rightarrow \infty} A_rf(0) = \infty$. Indeed, fix $N \in \mathbb{N}$ and take $r_0 > N$ such that
	\begin{displaymath}
	\int_{(N, r)} e^{x^2} \, dx \geq \frac{1}{3} 	\int_{(-r, r)} e^{x^2} \, dx, 
	\end{displaymath}
	 for each $r \geq r_0$. Therefore, for that $r$, we obtain
	 \begin{displaymath}
	 A_rf(0) = \frac{1}{\mu(B_r(0))} \int_{B_r(0)} f(x) \, e^{x^2} \, dx \geq \frac{N}{\mu(B_r(0))} \int_{(N, r)} e^{x^2} \, dx \geq \frac{N}{3},
	 \end{displaymath}
	 and thus $M^cf(0) = \infty$. Next, it is easy see that for any $x > 0$ there is $A_rf(x) \geq A_{r+x}f(0)$ for $r \geq x$. This fact, in turn, gives $Mf^c(x) = \infty$ for any $x \geq 0$.
	 
	 Now we show that $M^cf(x) < \infty$ if $x$ is strictly negative. Fix $x < 0$ and $r > 0$. We can assume that $r > |x|$, since for the smaller values of $r$ we have $A_rf(x) = 0$. Observe that it is possible to choose $r_0 > |x|$ such that for each $r \geq r_0$ 
	 \begin{displaymath}
	 e^{(x+r)^2} \leq 2 \, |x| \, e^{r^2}.
	 \end{displaymath}
	 If $r < r_0$, then $A_rf(x) \leq f(x + r_0)$. On the other hand, if $r \geq r_0$, then
	 \begin{displaymath}
	 A_rf(x) \leq \frac{1}{\mu(B_r(x))} \int_{B_r(x)} f(x) \, e^{x^2} \, dx \leq \frac{e^{(x+r)^2}}{2 \, \mu((x-r, -r))} \leq \frac{e^{(x+r)^2}}{2 \, |x| \, e^{r^2}} \leq 1,
	 \end{displaymath}
	 which implies $M^cf(x) < \infty$.       

	\section{Multidimensional case}
	
	Throughout this section we work with spaces that do not necessarily have a linear structure. In the first place, we would like to receive that in certain circumstances $M^c$ must possess the dichotomy property. Of course, for our purpose, we should ensure that the introduced criterion is relatively easy to apply and returns positive results also for some non-doubling spaces. Fortunately, it turns out that it is possible to find a condition that successfully meets all these requirements. 
	
	The following proposition is embedded in the context of Euclidean spaces, but it is worth keeping in mind that, in fact, it concerns all spaces $(X, \rho, \mu)$ for which $\mu(X \setminus L_f^c) = 0$ holds for each $f \in L^1_{loc}(\mu)$. 
	
	\begin{proposition}
	Consider the space $(\mathbb{R}^d, d_e, \mu)$, $d \geq 1$, and assume that
	\begin{equation}\label{C}
	\exists y_0 \in \mathbb{R}^d \colon \limsup_{r \rightarrow \infty} \frac{\mu(B_{r+1}(y_0))}{\mu(B_{r}(y_0))} = \tilde{C} = \tilde{C}(y_0) < \infty.
	\end{equation}
	Then the associated maximal operator $M^c$ possesses the dichotomy property. 	
	\end{proposition}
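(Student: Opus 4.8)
The plan is to follow the scheme of the proof of Proposition 1, adapted to the centered operator and to the growth control supplied by \eqref{C}. I would establish the dichotomy in contrapositive form: assuming $\mu(E^c_\infty(f)) > 0$, I will show $M^c f(x') = \infty$ for \emph{every} $x' \in \mathbb{R}^d$, so that $E^c_\infty(f) = \mathbb{R}^d$. The starting point uses the Remark following Lemma 1: since $\rho = d_e$ is induced by a norm and $\mu$ is arbitrary, we have $\mu(\mathbb{R}^d \setminus L^c_f) = 0$, so the positive-measure set $E^c_\infty(f)$ must contain a centered Lebesgue point $x \in L^c_f$ with $M^c f(x) = \infty$.

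First I would localize the blow-up to large radii, writing $A_r f(x) = \mu(B_r(x))^{-1} \int_{B_r(x)} |f| \, d\mu$ for the centered averages, so that $M^c f(x) = \sup_{r>0} A_r f(x)$. Because $x \in L^c_f$, one can fix $\epsilon > 0$ with $A_r f(x) \leq |f(x)| + 1$ for all $r \leq \epsilon$, while for $\epsilon \leq r \leq R$ the average is bounded by $\mu(B_\epsilon(x))^{-1} \int_{B_R(x)} |f| \, d\mu < \infty$, using $f \in L^1_{loc}(\mu)$ and $\mu(B_\epsilon(x)) > 0$. Hence $\sup_{0 < r \leq R} A_r f(x) < \infty$ for every finite $R$, and $M^c f(x) = \infty$ forces radii $r_n \to \infty$ with $A_{r_n} f(x) > n$. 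This is exactly where the centered Lebesgue-point property is indispensable: it guarantees that the singularity sits at infinity, where \eqref{C} can be applied.

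Next I would transport the mass from $x$ to an arbitrary $x'$. Writing $c = d_e(x,x')$, the triangle inequality gives $B_{r_n}(x) \subset B_{r_n + c}(x')$, so with $R_n := r_n + c$ one has
\[
A_{R_n} f(x') \;\geq\; \frac{\mu(B_{r_n}(x))}{\mu(B_{R_n}(x'))}\, A_{r_n} f(x) \;>\; \frac{\mu(B_{r_n}(x))}{\mu(B_{R_n}(x'))}\, n,
\]
and it remains to bound the ratio $\mu(B_{R_n}(x'))/\mu(B_{r_n}(x))$ uniformly in $n$. I would route this comparison through the distinguished center $y_0$ of \eqref{C}. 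With $a = d_e(y_0,x)$ and $a' = d_e(y_0,x')$, the inclusions $B_{r_n - a}(y_0) \subset B_{r_n}(x)$ and $B_{R_n}(x') \subset B_{R_n + a'}(y_0)$ yield
\[
\frac{\mu(B_{R_n}(x'))}{\mu(B_{r_n}(x))} \;\leq\; \frac{\mu(B_{s_n + D}(y_0))}{\mu(B_{s_n}(y_0))}, \qquad s_n := r_n - a, \quad D := c + a + a',
\]
reducing everything to concentric balls about $y_0$ with radii differing by the fixed amount $D$.

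The final step converts the unit-increment $\limsup$ in \eqref{C} into a uniform bound for the increment $D$. Fixing any $\tilde{C}' > \tilde{C}$, there is $R_0$ with $\mu(B_{r+1}(y_0)) \leq \tilde{C}' \mu(B_r(y_0))$ for all $r \geq R_0$; iterating this estimate $\lceil D \rceil$ times gives $\mu(B_{s+D}(y_0)) \leq (\tilde{C}')^{\lceil D \rceil} \mu(B_s(y_0))$ once $s \geq R_0$. Since $s_n \to \infty$, setting $K := (\tilde{C}')^{\lceil D \rceil}$ produces $A_{R_n} f(x') > n/K$ for all large $n$, so $M^c f(x') = \infty$; as $x'$ was arbitrary, $E^c_\infty(f) = \mathbb{R}^d$, which is the desired dichotomy. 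I expect the main obstacle to be precisely this reduction: \eqref{C} only furnishes a unit-step growth bound about the single center $y_0$, whereas the mechanism needs control for an arbitrary fixed increment $D$ and for balls centered at the moving points $x$ and $x'$. Decoupling the centers through the triangle-inequality inclusions and then iterating the unit-step estimate is the technical heart of the argument; the remainder is a transcription of the Proposition 1 scheme.
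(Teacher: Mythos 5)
Your proposal is correct and follows essentially the same route as the paper's own proof: both pass to a point of $L^c_f$ where $M^cf=\infty$, use the centered Lebesgue-point property to force the blow-up radii $r_n \to \infty$, compare $\mu(B_{r_n+c}(x'))$ with $\mu(B_{r_n}(x))$ via triangle-inequality inclusions routed through the distinguished center $y_0$, and iterate the unit-increment bound of \eqref{C} a fixed number of times (your $(\tilde{C}')^{\lceil D \rceil}$ playing the role of the paper's $(2\tilde{C})^m$). The only differences are cosmetic choices of constants.
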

	
	Observe that condition (\ref{C}) is related to certain global properties of a given metric measure space $\mathbb{X}$ and thus its occurrence (or not) should be independent of the choice of the point $y_0$ specified above. Indeed, it can be easily shown that if the inequality in (\ref{C}) holds for some $y_0$, then it is also true if we replace $y_0$ by an arbitrary point $y \in X$. 
	
	Secondly, as it turns out according to Theorem 2 in Section 4, the converse also holds in the case $\mathbb{X} = (\mathbb{R}^d, d_e, \mu)$. Namely, we shall prove that if $M^c$ possesses the dichotomy property, then (\ref{C}) holds for some $y_0 \in \mathbb{R}^d$. Notice that we state only one of the implications in Proposition 2 above because it is enough to prove Theorem 1. On the other hand, the opposite implication allows us to say that the formulated condition is sufficient and necessary at the same time and, since looking for such conditions is interesting itself, we discuss it in a separate section.
	
	\begin{proof}
		Let $f \in L^1_{loc}(\mu)$ and assume that $\mu(E_\infty^c(f)) > 0$. We take $x_0 \in L_f^c$ such that $M^cf(x_0) = \infty$. Hence for each $n \in \mathbb{N}$ we have a ball $B_n = B_{r_n}(x_0)$ satisfying 
		\begin{displaymath}
		\frac{1}{\mu(B_n)} \int_{B_n} |f(y)| \, d\mu(y) > n.
		\end{displaymath}
		Fix $\epsilon >0$ such that
		\begin{displaymath}
		\frac{1}{\mu(B_r(x_0))} \int_{B_r(x_0)} |f(y)-f(x_0)| \, d\mu(y) \leq 1,
		\end{displaymath} 
		for $r \leq \epsilon$ and denote $\delta = \mu(B_\epsilon(x_0))$. If $n \geq |f(x_0)| + 1$, then $B_n \subsetneq B_\epsilon(x_0)$ and, as a result, we have $\mu(B_n) \geq \delta$.	This fact easily implies that $\lim_{n \rightarrow \infty} r_n = \infty$, since $f$ is locally integrable.
		
		Now we fix any point $x \in \mathbb{R}^d$. There exists $r_0>0$ such that
		\begin{displaymath}
		\mu(B_{r+1}(y_0)) \leq 2 \tilde{C} \, \mu(B_{r}(y_0)), 
		\end{displaymath}
		for each $r \geq r_0$. We choose $n_0 \geq |f(x_0)| + 1$ large enough to ensure that $n \geq n_0$ implies $r_n - |y_0 - x_0| \geq r_0$. Consider the balls $B_n' = B_{r_n + |x_0-x|}(x)$ for $n \in \mathbb{N}$. If $n \geq n_0$, then
		\begin{displaymath}
		\mu(B_n') \leq \mu(B_{r_n + |x_0-x| + |y_0 - x|}(y_0)) \leq (2\tilde{C})^m \mu(B_{r_n - |x_0 - y_0|}(x_0)) \leq (2\tilde{C})^m \mu(B_n),
		\end{displaymath}
		where $m > |x_0-x| + |y_0 - x| + |x_0 - y_0|$ is a positive integer independent of $n$. Finally, by using the fact that $B_n \subset B_n'$, we get
		\begin{displaymath}
		M^cf(x) \geq \frac{1}{\mu(B_n')} \int_{B_n'} |f(y)| \, d\mu(y) \geq \frac{\mu(B_n)}{\mu(B_n')} \frac{1}{\mu(B_n)} \int_{B_n} |f(y)| \, d\mu(y) \geq \frac{n}{(2\tilde{C})^m},
		\end{displaymath}
		which gives $M^cf(x) = \infty$, since $n$ can be arbitrarily large. 
	\end{proof}
	
	\begin{remark*}
	Notice that the conclusion of Proposition 2 remains true if we take the metric $d_\infty$ instead of $d_e$ provided that this time the balls determined by $d_\infty$ are used in (\ref{C}). There are also no obstacles to getting a discrete counterparts of the above statements. Namely, one can replace $\mathbb{R}^d$ by $\mathbb{Z}^d$, $d \geq 1$, and obtain the desired result for the space $(\mathbb{Z}^d, \rho, \mu)$, where $\rho = d_e$ or $\rho = d_\infty$ and $\mu$ is arbitrary. 
	\end{remark*}
	
	Now, with Propositions 1 and 2 in hand, we can easily give an example of a non-doubling space, for which both $M$ and $M^c$ possess the dichotomy property. 
	
	\begin{example}
		Consider the space $(\mathbb{R}, d_e, \mu)$ with $d\mu(x) = e^{-x^2} dx$. Then both $M$ and $M^c$ possess the dichotomy property.
	\end{example}
	
	Indeed, $M$ possesses the dichotomy property by Proposition 1, while $M^c$ possesses the dichotomy property by Proposition 2, since $\lim_{r \rightarrow \infty} \mu(B_{r+1}(0)) / \mu(B_r(0)) = 1$. \newline
	
	At this point, a natural question arises: will we get the same result for Gaussian measures in higher dimensions? The following proposition settles affirmatively this problem.  
	
	\begin{proposition}
		Consider the space $(\mathbb{R}^d, d_e, \mu)$ with $\mu(\mathbb{R}^d) < \infty$. Assume that $\mu$ is determined by a strictly positive weight $w$ satisfying
	\begin{equation}\label{D}
	0 < c_n \leq w(x) \leq C_n < \infty, \qquad x \in B_n(0), \, n \in \mathbb{N},
	\end{equation}
	for some numerical constants $c_n$ and $C_n$, $ n \in \mathbb{N}$. Then the associated maximal operators, $M$ and $M^c$, both possess the dichotomy property.
	\end{proposition}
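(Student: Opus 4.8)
The plan is to dispatch $M^c$ at once through Proposition 2 and to reserve the real work for $M$. For $M^c$ I would simply verify condition (\ref{C}). Since $\mu(\mathbb{R}^d) < \infty$ and $w$ is strictly positive, for any fixed $y_0$ the quantities $\mu(B_r(y_0))$ increase to $\mu(\mathbb{R}^d) \in (0,\infty)$ as $r \to \infty$, whence $\mu(B_{r+1}(y_0))/\mu(B_r(y_0)) \to 1$. Thus (\ref{C}) holds with $\tilde{C} = 1$, and Proposition 2 yields the dichotomy property for $M^c$.

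For $M$ I would follow the scheme of Propositions 1 and 2, but two new ingredients are forced by the non-centered, multidimensional, non-doubling setting. The first, and the step I expect to be the main obstacle, is to show that $\mu(\mathbb{R}^d \setminus L_f) = 0$. For a general $\mu$ the non-centered operator need not be of weak type $(1,1)$, and the remark following Lemma 1 secures the differentiation theorem only for the centered set $L_f^c$; here condition (\ref{D}) is exactly what lets me bypass weak $(1,1)$. Indeed, (\ref{D}) makes $\mu$ comparable to Lebesgue measure, with constants $c_M \le w \le C_M$, on each ball $B_M(0)$; since the balls $B \ni x_0$ with $r(B) \to 0$ eventually lie in a fixed such region (as $B \subset B_{2r(B)}(x_0)$), one has $\frac{1}{\mu(B)}\int_B |f - f(x_0)|\,d\mu \le \frac{C_M}{c_M}\,\frac{1}{|B|}\int_B |f - f(x_0)|\,dx$. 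The classical non-centered Lebesgue differentiation theorem for Lebesgue measure makes the right-hand side vanish at almost every $x_0$; as Lebesgue-null sets are $\mu$-null, this gives $x_0 \in L_f$ for $\mu$-a.e. $x_0$.

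With this in hand I would argue as follows. Assuming $\mu(E_\infty(f)) > 0$, pick $x_0 \in L_f$ with $Mf(x_0) = \infty$ and balls $B_n = B_{r_n}(\xi_n) \ni x_0$ whose averages of $|f|$ exceed $n$. Choosing $\epsilon \in (0,1)$ so that every ball $B \ni x_0$ with $r(B) \le \epsilon$ satisfies $\frac{1}{\mu(B)}\int_B |f - f(x_0)|\,d\mu < 1$, I obtain $r_n > \epsilon$ once $n \ge |f(x_0)| + 1$. The second new ingredient is a uniform lower bound $\mu(B_n) \ge \delta_0 > 0$: any ball of radius greater than $\epsilon$ containing $x_0$ contains some ball $B_{\epsilon/4}(z_n)$ with $|z_n - x_0| \le \epsilon/2$ (the only subtlety, when $x_0$ lies near the boundary of $B_n$, is handled by displacing $x_0$ slightly toward $\xi_n$), and on this fixed bounded region the bound $w \ge c_M$ from (\ref{D}) yields $\mu(B_{\epsilon/4}(z_n)) \ge \delta_0$ independent of $n$.

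Finally, for an arbitrary $x \in \mathbb{R}^d$ I would enlarge to $B_n' = B_{r_n + |x - \xi_n|}(\xi_n)$, a ball that contains both $x$ and $B_n$. Then $\int_{B_n'} |f|\,d\mu \ge \int_{B_n} |f|\,d\mu > n\,\mu(B_n) \ge n\,\delta_0$, while $\mu(B_n') \le \mu(\mathbb{R}^d) < \infty$; hence $Mf(x) \ge n\,\delta_0/\mu(\mathbb{R}^d) \to \infty$. Here the finiteness of $\mu$ plays the role that the doubling ratio $(2\tilde{C})^m$ played in Proposition 2, rendering the enlargement harmless. Therefore $E_\infty(f) = \mathbb{R}^d$, and $M$ possesses the dichotomy property.
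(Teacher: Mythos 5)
Your proposal is correct, and its overall skeleton coincides with the paper's: $M^c$ is handled by Proposition 2 with $\tilde{C}=1$; for $M$ one shows $\mu(\mathbb{R}^d\setminus L_f)=0$, derives a uniform lower bound $\mu(B_n)\geq\delta_0>0$, and then passes from $B_n$ to an enlarged ball containing an arbitrary point $x$, with the finiteness of $\mu(\mathbb{R}^d)$ playing the role that the doubling ratio played in Proposition 2 --- exactly as in the paper. The one step you implement genuinely differently is the differentiation lemma $\mu(\mathbb{R}^d\setminus L_f)=0$. The paper fixes $n$ and patches the weight, setting $w_n=w$ on $B_n(0)$ and $w_n=1$ outside; by (\ref{D}) the measure $\mu_n$ so obtained is globally comparable to Lebesgue measure, hence doubling, so the differentiation theorem for doubling measures applies to $f_n=f\chi_{B_n(0)}$, and since $\mu$, $f$ agree with $\mu_n$, $f_n$ near each point of $B_n(0)$, this yields $\mu(B_n(0)\setminus L_f)=0$ for every $n$. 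You instead dominate $\mu$-averages by Lebesgue averages on a fixed bounded region via $c_M\leq w\leq C_M$ and invoke the classical non-centered Lebesgue differentiation theorem, transferring the null exceptional set back to $\mu$ by absolute continuity (implicitly using also that $f\in L^1_{loc}(\mu)$ gives $f\in L^1_{loc}(dx)$ locally, by $w\geq c_M$). Both routes rest on the same observation --- condition (\ref{D}) makes $\mu$ locally comparable to Lebesgue measure --- and both are complete; yours is slightly more elementary in that it requires no auxiliary measures, while the paper's patching trick stays entirely within the framework of differentiation for doubling measures and so adapts verbatim to more abstract settings. Finally, your explicit treatment of the bound $\mu(B_n)\geq\delta_0$ (the inscribed ball $B_{\epsilon/4}(z_n)$, with $x_0$ displaced toward the center when it lies near $\partial B_n$) is precisely the geometric content that the paper compresses into the phrase ``combining condition (\ref{D}) with the fact that $r(B_n)\geq\epsilon/2$''.
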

	
	\begin{proof}
		It suffices to prove that $M$ possesses the dichotomy property, since $\mu(\mathbb{R}^d) < \infty$ implies that \eqref{C} is satisfied with $\tilde{C} = 1$ (regardless of which point $y_0 \in \mathbb{R}^d$ we choose).
		
		Take $f \in L_{loc}^1(\mu)$. We shall show that $\mu(\mathbb{R}^d \setminus L_f) = 0$. For a fixed $n \in \mathbb{N}$ let us consider the measure $\mu_n$ determined by $w_n$ satisfying
		\begin{displaymath}
		w_n(x) = \left\{ \begin{array}{rl}
		
		w(x) & \textrm{if } x \in B_n(0),  \\
		
		1 & \textrm{otherwise. }  \end{array} \right. 
		\end{displaymath}
	Observe that condition \eqref{D} implies that $\mu_n$ is doubling. Let $f_n = f \chi_{B_n(0)}$. We have
	\begin{displaymath}
	\mu(B_n(0) \setminus L_f) = \mu_n(B_n(0) \setminus L_{f_n}(\mu_n)) \leq \mu_n(\mathbb{R}^d \setminus L_{f_n}(\mu_n))= 0,
	\end{displaymath}
	because $f_n \in L^1_{loc}(\mu_n)$ and this yields $\mu(\mathbb{R}^d \setminus L_f) = 0$, since $n$ can be arbitrarily large.
	
	Assume that $\mu(E_\infty(f)) > 0$ and take $x_0 \in L_f$ such that $Mf(x_0) = \infty$. For each $n \in \mathbb{N}$ we have a ball $B_n \ni x_0$ for which
	\begin{displaymath}
	\frac{1}{\mu(B_n)} \int_{B_n} |f(y)| \, d\mu(y) > n.
	\end{displaymath}
	Fix $\epsilon >0$ such that
	\begin{displaymath}
	\frac{1}{\mu(B)} \int_{B} |f(y)-f(x_0)| \, d\mu(y) \leq 1,
	\end{displaymath} 
	whenever $B \subset B_\epsilon(x_0)$. If $n \geq |f(x_0)| + 1$, then $B_n \subsetneq B_\epsilon(x_0)$. Thus, combining condition \eqref{D} with the fact that $r(B_n) \geq \epsilon / 2$ for that $n$, we conclude that $\mu(B_n) \geq \delta$, where $\delta = \delta(x_0, \epsilon)$ is strictly positive and independent of $n$.
	
	Now we fix any point $x \in \mathbb{R}^d$ and take $n \geq |f(x_0)| + 1$. Let $B_n'$ be any ball containing $x$ and $B_n$. Then we get
	\begin{displaymath}
	Mf(x) \geq \frac{1}{\mu(B_n')} \int_{B_n'} |f(y)| \, d\mu(y) \geq \frac{1}{\mu(\mathbb{R}^d)} \int_{B_n} |f(y)| \, d\mu(y) \geq \frac{\delta n}{\mu(\mathbb{R}^d)},
	\end{displaymath}
	which gives $M^cf(x) = \infty$, since $n$ can be arbitrarily large. 
	\end{proof}
	
	%\begin{corollary*}
	%	Consider the space $(\mathbb{R}^d, d_e, \mu)$, $d \geq 1$, with $d\mu(x) = e^{-\|x\|^2} dx$. Then the associated maximal operators, $M$ and $M^c$, both possess the dichotomy property.
	%\end{corollary*}
		
	Until now we furnished examples illustrating two of the four possibilities related to the problem of possessing or not the dichotomy property by $M$ and $M^c$. Notice that in both considered situations the indicated space was $\mathbb{R}$ with the usual metric and measure determined by a suitable weight. Unfortunately, as was indicated in Proposition 1, such examples cannot be used to cover the remaining two cases, since this time we want $M$ to not possess the dichotomy property. Therefore, a natural step is to try to use $\mathbb{R}^2$ instead of $\mathbb{R}$. This idea turns out to be right. However, for simplicity, the other two examples will be initially constructed in the discrete setting $\mathbb{Z}^2$. Also, for purely technical reasons, the metric $d_e$ is replaced by $d_\infty$. Nevertheless, after presenting Examples 3 and 4, we include some additional comments in order to convince the reader that it is also possible to obtain the desired results for the appropriate metric measure spaces of the form $(\mathbb{R}^2, d_e, \mu)$.
	
	While dealing with $\mathbb{Z}^2$, for the sake of clarity, we will write $B_r(n,m)$ and $\mu(n,m)$ instead of $B_r((n,m))$ and $\mu(\{(n,m)\})$, respectively. 
	
	%Nevertheless, after studying Examples 3 and 4, the reader should be sure that one can get the same results for the appropriate metric measure spaces of the form $(\mathbb{R}^2, d_e, \mu)$. In order to dispel any doubts, we will also include some additional comments later on.
	
	\begin{example}
		Consider the space $(\mathbb{Z}^2, d_\infty, \mu)$, where $\mu$ is defined by 
	\begin{displaymath}
	\mu(n,m) = \left\{ \begin{array}{rl}

	4^{|m|} & \textrm{if } n = 0,  \\
	
	1 & \textrm{otherwise. }  \end{array} \right. 
	\end{displaymath}
	Then $M^c$ possesses the dichotomy property, while $M$ does not.
	\end{example}
	
	At first, observe that $M^c$ possesses the dichotomy property by Proposition 2 (or, more precisely, by the remark following Proposition 2), since 
	\begin{displaymath}
	\lim_{r \rightarrow \infty} \frac{\mu(B_{r+1}(0,0))}{\mu(B_{r}(0,0))} = 4.
	\end{displaymath}
	To verify the second part of the conclusion let us consider the function $f$ defined by
	\begin{displaymath}
	f(n,m) = \left\{ \begin{array}{rl}
	2^n & \textrm{if } n > 0 \textrm{ and } m=0,  \\
	
	0 & \textrm{otherwise. }  \end{array} \right. 
	\end{displaymath}
	We will show that $Mf(1,0) = \infty$ and $Mf(-1,0) < \infty$ (in fact, it should be clear for the reader that $(1,0)$ and $(-1, 0)$ may be replaced by any other points $(n_1,m_1)$ and $(n_2,m_2)$ such that $n_1$ is strictly positive and $n_2$ is strictly negative).
	
	Consider the balls $B_N = B_N(N,0)$ for $N \in \mathbb{N}$. Observe that
	\begin{displaymath}
	Mf(1,0) \geq \frac{1}{\mu(B_N)} \sum_{(n,m) \in B_N} f(n,m) \, \mu(n,m) \geq \frac{f(N,0) \, \mu(N,0) }{(2N-1)^2} =   \frac{2^N}{(2N-1)^2},	
	\end{displaymath}
	which implies $Mf(1,0) = \infty$.
	
	On the other side, consider any ball $B$ containing $(-1,0)$ and denote
	\begin{displaymath}
	K = K(B) = \max\{n \in \mathbb{N} \colon (n,0) \in B\}.
	\end{displaymath}
	If $K \leq 0$, then $\sum_{(n,m) \in B} f(n,m) \, \mu(n,m) = 0$. In turn, if $K > 0$, then $B$ must contain at least one of the points $(0,-\lfloor K/2 \rfloor )$ and $(0,\lfloor K/2 \rfloor)$. Consequently, we have
	\begin{displaymath}
	\frac{1}{\mu(B)} \sum_{(n,m) \in B} f(n,m) \, \mu(n,m) \leq \frac{2 f(K,0)}{4^{\lfloor K/2 \rfloor}} \leq 4,
	\end{displaymath}
	which implies $Mf(-1,0) < \infty$. 
	
	\begin{example}
		Consider the space $(\mathbb{Z}^2, d_\infty, \mu)$, where $\mu$ is defined by
		\begin{displaymath}
		\mu(n,m) = \left\{ \begin{array}{rl}
		4^{|m|} & \textrm{if } n = 0,  \\
		
		2^{n^2} & \textrm{if } n < 0 \textrm{ and } m = 0,  \\
		
		1 & \textrm{otherwise. }  \end{array} \right. 
		\end{displaymath}
		Then both $M$ and $M^c$ do not possess the dichotomy property.
	\end{example}
	
	To verify that $M$ does not possess the dichotomy property we can use exactly the same function $f$ as in Example 3. It is easy to see that $Mf(1,0) = \infty$ and $Mf(-1,0) < \infty$ hold as before. Next, in order to show that $M^c$ does not possess the dichotomy property, let us take the function $g$ defined by
	
	\begin{displaymath}
	g(n,m) = \left\{ \begin{array}{rl}
	2^{n^2} & \textrm{if } n > 0 \textrm{ and } m = 0,  \\
	
	0 & \textrm{otherwise. }  \end{array} \right. 
	\end{displaymath}
	Consider the balls $B_N^+ = B_N(1,0)$ and $B_N^- = B_N(-1,0)$ for $N \in \mathbb{N}$. Observe that for large values of $N$ we have
	\begin{displaymath}
	\frac{1}{\mu(B_N^+)} \sum_{(n,m) \in B_N^+} g(n,m) \, \mu(n,m) \geq \frac{g(N,0)}{2 \mu(-N+2, 0)} = 2^{N^2 - (N-2)^2 - 1},
	\end{displaymath}
	and
	\begin{displaymath}
	\frac{1}{\mu(B_N^-)} \sum_{(n,m) \in B_N^-} g(n,m) \, \mu(n,m) \leq \frac{2 g(N-2,0)}{\mu(-N, 0)} = 2^{-N^2 + (N-2)^2 + 1}.
	\end{displaymath}
	Consequently, this easily leads to the conclusion that $Mg(1,0) = \infty$ and $Mg(-1,0) < \infty$. \newline
	
	At last, as we mentioned earlier, we will try to outline a sketch of how to adapt Examples 3 and 4 to the situation of $\mathbb{R}^2$ with the Euclidean metric. First of all, note that the key idea of Example 3 was to construct a measure which creates a kind of barrier separating (in the proper meaning) the points $(n,m)$ with positive and negative values of $n$, respectively. Exactly the same effect can be obtained if we define $w$ so that it behaves like $e^{|y|}$ in the strip $-\frac{1}{2} < |x| < \frac{1}{2}$ and like $1$ outside of it. However, because of some significant differences between the shapes of the balls determined by $d_e$ and $d_\infty$, respectively, one should be a bit more careful when looking for the proper function $f$ such that $Mf(x,y) = \infty$ if $x > 1$ and $Mf(x,y) < \infty$ if $x < -1$. Observe that any ball $B$ such that $(-1,0) \in B$ and $(N,0) \in B$ must contain at least one of the points $(0, - \sqrt{N})$ and $(0, \sqrt{N})$. Therefore, if $B_N$ is such that $N$ is the largest positive integer $n$ satisfying $(n,0) \in B_N$, then it would be advantageous to ensure that the integral $\int_{B_N} f(x,y) w(x,y) \, dx \, dy$ is no more than $C e^{\sqrt{N}}$, where $C > 0$ is some numerical constant. On the other hand, we want this quantity to tend to infinity with $N$ faster than $N^2$. This two conditions are fulfilled simultaneously if, for example, $f(x,y)$ behaves like $x^2$ in the region $\{(x,y) \in \mathbb{R}^2 \colon x > 0, -\frac{1}{2} < |y| < \frac{1}{2}\}$, and equals $0$ outside of it. 
	
	Finally, to arrange the situation of Example 4, it suffices to define $w$ in such a way that it is comparable to $e^{|y|}$ if $-\frac{1}{2} < |x| < \frac{1}{2}$, to $e^{x^2}$ if $x < 0$ and $-\frac{1}{2} < |y| < \frac{1}{2}$, and to $1$ elsewhere. Also, apart from those described above, there are no further difficulties in finding the appropriate functions $f$ and $g$ that break the dichotomy condition for $M$ and $M^c$, respectively.
	
\section{Necessary and sufficient condition}

The last section is mainly devoted to describing the exact characterization of situations, in which $M^c$ possesses the dichotomy property, for metric measure spaces of the form $(\mathbb{R}^d, d_e, \mu)$, $d \geq 1$, where $\mu$ is arbitrary. Namely, our goal is to prove the following.

\begin{theorem}
	Consider the metric measure space $(\mathbb{R}^d, d_e, \mu)$, $d \geq 1$, where $\mu$ is an arbitrary Borel measure. Then $M^c$ possesses the dichotomy property if and only if (\ref{C}) holds.
\end{theorem}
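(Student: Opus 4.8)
Since Proposition 2 already supplies the implication ``\eqref{C} $\Rightarrow$ dichotomy'', the only remaining content is the converse, which I would attack in its contrapositive form: \emph{if \eqref{C} fails, then $M^c$ does not possess the dichotomy property.} So I would assume that $\limsup_{r\to\infty}\mu(B_{r+1}(y_0))/\mu(B_r(y_0))=\infty$ for one—and hence, by the point-independence recorded right after Proposition 2, for every—$y_0$, and fix $y_0=0$. The goal is to produce a single $f\in L^1_{loc}(\mu)$ together with a point $x^\ast$ at which $M^cf(x^\ast)<\infty$ while $\mu(E^c_\infty(f))>0$; this is exactly the failure of the dichotomy. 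The first step is the combinatorial reformulation of $\neg\eqref{C}$: one selects radii $r_k\uparrow\infty$, as sparsely spaced as needed, so that the ``heavy shells'' $S_k=B_{r_k+1}(0)\setminus B_{r_k}(0)$ satisfy $\mu(S_k)\ge(1-\Lambda_k^{-1})\mu(B_{r_k+1}(0))$ with $\Lambda_k\to\infty$, i.e.\ almost all of the mass of $B_{r_k+1}(0)$ sits in the thin shell $S_k$.

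The guiding mechanism, abstracted from Example~1, is the following asymmetry. A point $x$ is ``bad'' (i.e.\ $M^cf(x)=\infty$) when, for infinitely many $k$, some centered ball captures the bulk of the $f$-mass attached to $S_k$ while having $\mu$-measure only comparable to that mass; a point $x^\ast$ is ``good'' when every centered ball that reaches the $f$-mass of $S_k$ is \emph{forced} to also swallow a much heavier, $f$-free region, so that the corresponding average is diluted and the averages telescope to a finite total. In the radial prototype this heavier region is the far tip of the ball, lying on the side opposite to the support of $f$ and reaching a strictly higher (hence, by the fast growth, much heavier) shell. Thus I would try to place the mass of $f$ inside the heavy shells in a one-sided fashion and let the growth furnished by $\neg\eqref{C}$ play the role of the diluter.

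Concretely, I would first align directions: covering $S^{d-1}$ by a fixed finite family of spherical caps (cardinality depending only on $d$) and applying the pigeonhole principle inside each shell and then across the infinitely many indices $k$, I would pass to a subsequence along which a fixed cap $C_k\subset S_k$ pointing in a common direction $\nu$ carries a definite share of $\mu(S_k)$. Setting $f=\sum_k c_k\chi_{C_k}$ with heights $c_k\uparrow\infty$ chosen by a balancing argument (calibrated against the cap sizes and the spacing of the $r_k$), I would take $x^\ast$ on the side opposite to $\nu$, so that every centered ball from $x^\ast$ reaching $C_k$ is geometrically forced to reach a still heavier shell lying on the $\nu^{\perp}$/$(-\nu)$ side where $f$ vanishes; this should yield $M^cf(x^\ast)<\infty$. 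Points lying just inside the shells on the $\nu$-side, on the other hand, reach $C_k$ with balls contained in $B_{r_k+1}(0)$, whose measure is of order $\mu(C_k)$, giving averages $\gtrsim c_k\to\infty$. Finally, to turn ``the averages are unbounded'' into ``$M^cf=\infty$ on a set of positive measure'', I would use the Lebesgue-point property $\mu(\mathbb{R}^d\setminus L^c_f)=0$ from the Remark (available since $M^c$ is of weak type $(1,1)$ here), which prevents the bad set from being an exceptional null set.

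The step I expect to be the main obstacle is the simultaneous fulfilment of the two competing demands on the mass: it must be heavy enough in $\mu$ to blow up the averages on the bad set, yet remain diluted at $x^\ast$. For a general, possibly strongly anisotropic $\mu$ the required diluter need not exist at all—for instance a measure whose heavy shells concentrate only in the single direction $\nu$ (tall spikes marching off to infinity) makes \emph{every} point bad, so that the dichotomy holds \emph{trivially} even though \eqref{C} fails. This shows that $\neg\eqref{C}$ alone cannot force the conclusion, and it is precisely here that the mild regularity hypotheses on $\mu$ must enter: they are what guarantee that, from a suitable $x^\ast$, the unbounded growth provided by $\neg\eqref{C}$ is always visible as a genuine $f$-free diluting mass opposite to the support of $f$, while never being cheaply visible from the bad set. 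Verifying that this dilution/concentration balance can indeed be arranged for every $\mu$ meeting those hypotheses is the delicate heart of the argument.
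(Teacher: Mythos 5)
Your high-level skeleton --- contrapositive, extraction of heavy shells from the failure of (\ref{C}), a directional pigeonhole along a subsequence, and a dilution asymmetry between a bad region and a good point --- is exactly the paper's, but your concrete construction \emph{inverts} the paper's, and the inversion is fatal at both ends. You put the mass of $f$ on the heavy caps $C_k$ and the bad points next to the shells. Then, for a \emph{fixed} point $x$ near shell $k$, the local cap can only produce averages of size at most $\sim c_k$, a finite number; to get $M^cf(x)=\infty$ the point $x$ must exploit infinitely many far caps $C_j$, $j>k$, and there the denominator is out of control: a centered ball capturing a definite fraction of $\mu(C_j)$ must in general reach beyond $B_{r_j+1}(0)$ (the cap has diameter comparable to $\theta_0 r_j$, and its $\mu$-mass may sit at its far corner), and past $B_{r_j+1}(0)$ the measure is completely unconstrained by $\neg(\ref{C})$. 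Even your local claim that points near shell $k$ see averages $\gtrsim c_k$ presupposes knowledge of \emph{where} inside $C_k$ the mass sits. So your construction exhibits, for each $k$, some point where $M^cf\gtrsim c_k$, but no single point where $M^cf=\infty$, let alone a positive-measure set of such points; this is a union-over-$k$ fallacy, and the Lebesgue-point property cannot repair it --- $\mu(\mathbb{R}^d\setminus L^c_f)=0$ says a.e.\ point is a Lebesgue point, it does not convert unboundedness of averages over \emph{varying} points into $\mu(E^c_\infty(f))>0$ (in the paper, $L^c_f$ enters only the forward implication, Proposition 2). Symmetrically, your good point $x^\ast$ has no guaranteed diluter: all the heavy mass lies in direction $\nu$, so on the $(-\nu)$-side there need be nothing heavy, and a ball from $x^\ast$ that swallows $C_k$ may well have measure comparable to $\mu(C_k)$, making $x^\ast$ bad too --- an obstacle you noticed yourself.

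The resolution, however, is not extra hypotheses on $\mu$: your closing claim that $\neg(\ref{C})$ alone cannot force failure of the dichotomy contradicts the theorem and is false. The paper's construction is the mirror image of yours. The heavy sectors (found by pigeonhole on \emph{nested} dyadic sectors of shrinking width $2\pi/2^n$ plus a diagonal argument, which pins down a single direction $\phi_0$) are never the support of $f$; they serve only as diluters. The mass of $f$ goes into tiny balls $B_{n-}=B_{1/2}(-a_{k_n}+2,0)$ on the side \emph{opposite} to $\phi_0$, with coefficients $2^n\mu(B_n)/\mu(B_{n-})$ calibrated against the \emph{light inner ball} $B_n=B_{a_{k_n}}(0,0)$, so that $\int_{B_{n-}}f\,d\mu=2^n\mu(B_n)$. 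Then the bad set is a ball around the origin: each of its points contains every $B_{n-}$ in the centered ball of radius $a_{k_n}-1$, which stays inside $B_n$, so the average is $\geq 2^n$ for every $n$ and $M^cf=\infty$ there. The good point is $(3,0)$, displaced \emph{toward} $\phi_0$: any centered ball reaching $B_{N-}$ has radius $>a_{k_N}$ and is therefore forced to contain the whole sector $S_{N+,j_N}$, whose measure (at least $2^N\mu(B_N)$, by the pigeonhole and $k_N\geq N$) dominates the total $f$-mass $\leq 2^{N+1}\mu(B_N)$, so all averages at $(3,0)$ stay bounded. In particular, your purported counterexample --- heavy spikes marching to infinity in a single direction $\nu$ --- is handled by precisely this $f$: points near the origin reach the opposite-side masses with balls contained in $B_{a_{k_n}}(0,0)$ that \emph{miss} the spikes, hence are bad, while $(3,0)$ must swallow the spike at distance $\approx a_{k_N}$ before reaching the mass, hence is good. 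Your example only shows that \emph{your} $f$ (supported on the spikes) fails to violate the dichotomy there --- every point is bad for it --- not that every $f$ fails; the theorem needs only one $f$ that works, and the paper's choice does.
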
 

We show the proof only for $d=2$, since in this case all the significant difficulties are well exposed and, at the same time, we omit a few additional technical details that arise when $d \geq 3$. In turn, the case $d=1$ is much simpler than the others, so we do not focus on it. When dealing with $\mathbb{R}^2$, we will write shortly $B_r(x,y)$ instead of $B_r((x,y))$, just like we did in the previous section in the context of $\mathbb{Z}^2$. 

\begin{proof}
	 First of all, let us recall that one of the implications has already been proven in Proposition 2. Thus, it is enough to show that (\ref{C}) is necessary for $M^c$ to possess the dichotomy property. 
	
	Take $(\mathbb{R}^2, d_e, \mu)$ and assume that (\ref{C}) fails to occur. Thus, for the point $(0,0)$ there exists a strictly increasing sequence of positive numbers $\{a_k\}_{k \in \mathbb{N}}$ such that
	\begin{displaymath}
	\mu(B_{a_k+1}(0,0)) \geq 2^{2k} \, \mu(B_{a_k}(0,0))
	\end{displaymath}
	holds for each $k \in \mathbb{N}$. In addition, we can force that $a_1 \geq 8$ and $a_{k+1} \geq a_k + 2$. For $n \in \mathbb{N}$ we introduce the auxiliary sets $S_{k+, j}^{(n)}$, $j \in \{1, \dots, 2^n\}$, defined by
	\begin{displaymath}
	S_{k+, j}^{(n)} = \Big \{(x,y) \in B_{a_k+1}(0,0) \colon \phi(x,y) \in \big[ \frac{2 \pi (j-1)}{2^n}, \frac{2 \pi j}{2^n} \big) \Big\},
	\end{displaymath}
	where $\phi(x,y) \in [0, 2\pi)$ is the angle that $(x,y)$ takes in polar coordinates.
	
	Take $n = 1$ and choose $j_1 \in \{1, 2\}$ such that the set
	\begin{displaymath}
	\Lambda_1 = \{k \in \mathbb{N} \colon \mu(S_{k+, j_1}^{(1)}) \geq \frac{1}{2} \mu(B_{a_{k}}(0,0)) \}
	\end{displaymath}
	is infinite. Next, take $n = 2$ and choose $j_2 \in \{1, 2, 3, 4\}$ satisfying $\lceil j_2/2 \rceil = j_1$ (where $\lceil \, \cdot \, \rceil$ is the ceiling function) and such that
	\begin{displaymath}
	\Lambda_2 = \{ k \in \Lambda_1 \colon \mu(B_{k+, j_2}^{(2)}) \geq \frac{1}{4} \mu(B_{a_{k}}(0,0)) \}
	\end{displaymath}
	is infinite. Continuing this process inductively we receive a sequence $\{j_n\}_{n \in \mathbb{N}}$ satisfying $\lceil j_{n+1} / 2 \rceil = j_n$, $n \in \mathbb{N}$, and, by invoking the diagonal argument, a strictly increasing subsequence $(a_{k_n})_{n \in \mathbb{N}}$ such that for each $n \in \mathbb{N}$ we have
	\begin{displaymath}
	\mu(S_{k_n+, j_n}^{(n)}) \geq \frac{1}{2^n} \mu(B_{a_{k_n}}(0,0)), \qquad n \in \mathbb{N}.
	\end{displaymath}
	
	From now on, for simplicity, we will write $B_n$ %, $B_{n+}$
	and $S_{n+, j_n}$ instead of $B_{a_{k_n}}(0,0)$ %, $B_{a_{k_n}+1}(0,0)$
	and $S_{k_n+, j_n}^{(n)}$, respectively. Observe that the received sequence $\{j_n\}_{n \in \mathbb{N}}$ determines a unique angle $\phi_0 \in [0, 2\pi)$ which indicates a ray around which, loosely speaking, a significant part of $\mu$ is concentrated. For the sake of clarity we assume that $\phi_0 = 0$ and therefore $\{j_n\}_{n \in \mathbb{N}}$ equals either $(1, 1, 1, \dots)$ or $(2, 4, 8, \dots)$.
	
	Denote $B_{n-} = B_{1/2}(-a_{k_n}+2, 0)$, $n \in \mathbb{N}$, and consider the function $f$ defined by
	\begin{displaymath}
	f = \sum_{n=1}^\infty \frac{2^n \mu(B_n)}{\mu(B_{n-})} \chi_{B_{n-}}.
	\end{displaymath} 
	Of course, $f \in L^1_{loc}(\mu)$. We will show that $M^cf(x_0,y_0) = \infty$ for $(x_0,y_0) \in B_{1/2}(0,0)$ and $M^cf(x_0,y_0) < \infty$ for $(x_0,y_0) \in B_{1/2}(3,0)$.
	
	Fix $(x_0,y_0) \in B_{1/2}(0,0)$ and observe that $B_{n-} \subset B_{a_{k_n}-1}(x_0,y_0) \subset B_n$ and therefore
	\begin{displaymath}
	\frac{1}{\mu(B_{a_{k_n}-1}(x_0,y_0))} \int_{B_{a_{k_n}-1}(x_0,y_0)} f \, d\mu \geq \frac{1}{\mu(B_n)} \int_{B_{n-}} f \, d\mu = 2^n,
	\end{displaymath}
	which implies $M^cf(x_0,y_0) = \infty$.
	
	In turn, fix $(x_0,y_0) \in B_{1/2}(3,0)$ and consider $r > 0$ such that $B_r(x_0,y_0)$ intersects at least one of the sets $B_{n-}$, $n \in \mathbb{N}$. Notice that this requirement forces $r>2$. We denote
	\begin{displaymath}
	N = N(r) = \max\{ n \in \mathbb{N} \colon B_r(x_0,y_0) \cap B_{n-} \neq \emptyset \}.
	\end{displaymath}
	One can easily see that this implies $r > a_{k_n}$ and hence $(a_{k_n},0) \in B_{r-2}(x_0,y_0)$. It is possible to choose $N_0 = N_0(x_0,y_0) \geq 2$ such that if $N \geq N_0$, then $(a_{k_N},0) \in B_{r-2}(x_0,y_0)$ implies $S_{N+,j_N} \subset B_r(x_0,y_0)$. Let $\tilde{N} = \max\{r>0 \colon N(r)<N_0\}$. If $2 < r \leq \tilde{N}$, then
	\begin{displaymath}
	\frac{1}{\mu(B_r(x_0,y_0))} \int_{B_r(x_0,y_0)} f \, d\mu \leq \frac{1}{\mu(B_2(x_0,y_0))} \int_{B_{\tilde{N}}(x_0,y_0)} f \, d\mu = C, 
	\end{displaymath}
	where $C$ is a numerical constant independent of $r$. On the other hand, if $r > \tilde{N}$, then
	\begin{displaymath}
	\frac{1}{\mu(B_r(x_0,y_0))} \int_{B_r(x_0,y_0)} f \, d\mu \leq \frac{2^{N+1} \mu(B_N)}{\mu(S_{N+,j_N})} \leq 2,
	\end{displaymath}
	which implies $M^cf(x_0,y_0) < \infty$.
\end{proof}

\begin{remark*}
	Note that this time the proof relies on some Euclidean geometry properties and therefore it cannot be repeated in a more general context. The only clearly visible way to generalize it is to replace the Euclidean metric. Indeed, one can, for example, put a metric $\rho$ induced by any norm on $\mathbb{R}^d$ in place of $d_e$ and get the desired result by following the same path only with a few minor modifications. Notice that in this case, of course, the balls in (\ref{C}) are taken with respect to $\rho$. Thus, among other things, we must take into account how the shape of these balls is related to the direction determined by the angle $\phi_0$ specified in the proof. Finally, the weak type $(1,1)$ inequality of $M^c$ associated to $(\mathbb{R}^d, \rho, \mu)$, which is needed to provide $\mu(\mathbb{R}^d \setminus L^c_f) = 0$ in Proposition 2, can be deduced from a stronger version of the Besicovitch Covering Lemma (see \cite[Theorem 2.8.14]{F}).
\end{remark*}

We conclude our studies with an example which indicates that a possible necessary and sufficient condition for $M$ must be of a completely different form. Namely, while condition (\ref{C}) concerned only the growth at infinity of a given measure, the parallel condition for non-centered operators should deal with both global and local aspects of the considered spaces. Thus, this problem, probably more difficult, is an interesting starting point for further investigation.

\begin{example}
Consider the space $(\mathbb{R}^2, d_e, \mu)$ with $\mu = \lambda_1 + \lambda_2$, where $\lambda_1$ is $1$-dimensional Lebesgue measure on $A = [0,1] \times \{0\}$ and $\lambda_2$ is $2$-dimensional Lebesgue measure on the whole plane. Then there exists $f \in L^1(\mu)$ with compact support such that $E_\infty(f) = A$. 
\end{example}

Indeed, denote $S_n = [0,1] \times (2^{-n^2}, 2^{-n^2+1})$ and consider the function
	\begin{displaymath}
	f = \sum_{n=1}^{\infty} 2^n \chi_{S_n}.
	\end{displaymath}
	Observe that $f$ equals $0$ outside the square $[0,1] \times [0,1]$ and $\|f\|_{1} = \sum_{n=1}^{\infty} 2^n \cdot 2^{-n^2} \leq 2$. 
	
	Let us fix $x_0 \in [0,1]$ and consider the balls $B_n = B_{2^{-n^2 + \epsilon_n}}(x_0, 2^{-n^2})$, $n \in \mathbb{N}$, where $\epsilon_n > 0$ are such that $\mu(B_n) \leq 2^{-2n^2+2}$. Observe that $(x_0,0) \in B_n$ for each $n$. If $n \geq 2$, then $\mu(B_n \cap S_n) \geq 2^{-2n^2 - 1}$ and, consequently,
	\begin{displaymath}
	\frac{1}{\mu(B_n)} \int_{B_n} f \, d\mu \geq \frac{2^n \cdot 2^{-2n^2 - 1}}{2^{-2n^2+2}} = 2^{n-3}, 
	\end{displaymath}
	which implies $Mf(x_0,0) = \infty$. 
	
	On the other hand, consider $(x_0, y_0) \notin A$. In this case, there exist $\epsilon > 0$ and $L > 0$ such that $d_e((x_0,y_0),(x,y)) < \epsilon$ implies $f(x,y) \leq L$ and, as a result, we obtain $Mf(x_0,y_0) \leq \max\{L, 2/\lambda_2(B_{\epsilon/2}(x_0,y_0))\} < \infty$.

\section*{Acknowledgement}
This article was largely inspired by the suggestions of Professor Krzysztof Stempak. I would like to thank him for insightful comments and continuous help during the preparation of the paper.

\end{document}